\newcommand{\referenza}{}
\newtheorem{thm}{Theorem}[section]
\newtheorem*{thm*}{Theorem \referenza}
\newtheorem{cor}[thm]{Corollary}
\newtheorem*{cor*}{Corollary \referenza}
\newtheorem{lem}[thm]{Lemma}
\newtheorem*{lem*}{Lemma \referenza}
\newtheorem{prop}[thm]{Proposition}
\newtheorem*{prop*}{Proposition \referenza}
\newtheorem*{conj*}{Conjecture \referenza}
\newtheorem{rmk}[thm]{Remark}
\newtheorem{defi}[thm]{Definition}
\numberwithin{equation}{section}
\def \N {\mathbb N}
\def \Q {\mathbb Q}
\def \R {\mathbb R}
\def \C {\mathbb C}
\def \Z {\mathbb Z}
\newcommand{\sspace}{\text{\--}}
\DeclareMathOperator{\codim}{codim}
\title[Hermitian ranks]{Hermitian ranks of compact complex manifolds}
\author{Daniele Angella}
\address[Daniele Angella]{
Dipartimento di Matematica e Informatica ``Ulisse Dini''\\
Universit\`a degli Studi di Firenze\\
viale Morgagni 67/a\\
50134 Firenze, Italy
}
\email{daniele.angella@gmail.com}
\email{daniele.angella@unifi.it}
\author{Adriano Tomassini}
\address[Adriano Tomassini]{Dipartimento di Scienze Matematiche, Fisiche ed Informatiche\\
Plesso Matematico e Informatico\\
Università di Parma\\
Parco Area delle Scienze 53/A\\
43124 Parma, Italy
}
\email{adriano.tomassini@unipr.it}
\keywords{complex manifold, non-K\"ahler geometry, Hermitian metric, degenerate metric, K\"ahler rank, pluri-closed rank}
\thanks{The first author is supported by the Project PRIN ``Varietà reali e complesse: geometria, topologia e analisi armonica'', by the Project FIRB ``Geometria Differenziale e Teoria Geometrica delle Funzioni'', by SIR2014 project RBSI14DYEB ``Analytic aspects in complex and hypercomplex geometry'', and by GNSAGA of INdAM.
The second author is supported by Project PRIN ``Varietà reali e complesse: geometria, topologia e analisi armonica'' and by GNSAGA of INdAM}
\subjclass[2010]{32Q99, 32C35}
\date{\today}
\begin{document}

\begin{abstract}
 We investigate degenerate special-Hermitian metrics on compact complex manifolds, in particular, degenerate K\"ahler and locally conformally K\"ahler metrics on special classes of non-K\"ahler manifolds.
\end{abstract}

\maketitle

\section*{Introduction}
In this note, we continue in studying special-Hermitian structures on compact complex manifolds, in view of a (far-to-be-obtained) possible classification of compact complex non-K\"ahler manifolds.

In particular, we focus on the existence of degenerate special-Hermitian metrics. We investigate here degenerate K\"ahler metrics and degenerate locally conformally K\"ahler metrics, by introducing the notion of special-Hermitian ranks, as a first development of the K\"ahler rank by R. Harvey and H.~B. Lawson and by I. Chiose in the non-K\"ahler setting.
We investigate here classes of non-K\"ahler examples.

\medskip

R. Harvey and H. B. Lawson provided an intrinsic characterization of the K\"ahler condition. In \cite[Theorem (14)]{harvey-lawson}, they proved that, on a compact complex manifold $X$, one and only one of the following facts holds:
\begin{inparaenum}[\itshape (i)]
 \item there is a positive $(1,1)$-form being closed; (namely, a K\"ahler metric;)
 \item there is a (non-trivial) positive bidimension-$(1, 1)$-current being component of a boundary.
\end{inparaenum}
 
This let them to introduce a notion of K\"ahler rank for compact complex surfaces, in terms of the foliated set
$$ \mathcal B(X) \;:=\; \left\{ x \in X \;:\; \exists \varphi \in P^\infty_{\text{bdy}}(X) \text{ such that }\varphi_x\neq 0 \right\} \;, $$
where $P^\infty_{\text{bdy}}(X)$ denotes the subcone of smooth currents in the cone of positive bidimension-$(1,1)$-currents on $X$ being a boundary.

By \cite[Corollary 4.3]{chiose-toma}, the K\"ahler rank of a compact complex surface $X$ is equal to the maximal rank that a non-negative closed $(1, 1)$-form may attain at some point of $X$. (See also \cite[Definition 1.2]{fino-grantcharov-verbitsky}.) This allows I. Chiose to extend the notion of K\"ahler rank to higher-dimensional compact complex manifolds, \cite[Definition 1.1]{chiose}, by setting
\begin{equation}\label{eq:def-Kr}
\mathrm{Kr}(X) \;:=\; \max \left\{ k\in\N \;:\; \exists \omega\in \wedge^{1,1}X \text{ s.t. } \omega\geq0,\; d\omega=0, \text{ and }\omega^k\neq 0 \right\} \;.
\end{equation}

\medskip

By relaxing the K\"ahler condition, several notions of special-Hermitian metrics can be defined: {\itshape e.g.} Hermitian-symplectic, balanced in the sense of Michelsohn \cite{michelsohn}, pluri-closed \cite{bismut}, astheno-K\"ahler \cite{jost-yau}, Gauduchon \cite{gauduchon}, strongly-Gauduchon \cite{popovici}, and others.
The notion in Equation \eqref{eq:def-Kr} can be restated for some of these metrics: we will consider {\itshape e.g.} the SKT case \eqref{eq:SKT-rank}.

In particular, we introduce and study the {\em Hermitian locally conformally K\"ahler rank}.
It is defined in Equation \eqref{eq:HlcK-rank}. Essentially, we replace the condition of $d\omega=0$ in Equation \eqref{eq:def-Kr} by $d\omega-\vartheta\wedge\omega=0$ for some $d$-closed $1$-form $\vartheta=0$. By the Poincar\'e Lemma, $\vartheta$ is locally $d$-exact: $\vartheta\stackrel{\text{loc}}{=}dg$. Then $\exp(-g)\omega$ is a local conformal change of $\omega$ being K\"ahler.

Both the K\"ahler and the lcK conditions are cohomological in nature. Moreover, cohomologies of nilmanifolds (namely, compact quotients of connected simply-connected nilpotent Lie groups,) can be often reduced as Lie algebra invariants. It follows that the K\"ahler rank and the lcK rank of nilmanifolds is often encoded in the Lie algebra, see Lemma \ref{lem:inv-ranks}.
This allows us to study explicitly the K\"ahler rank and the lcK rank of $6$-dimensional nilmanifolds. (With the possible exception of nilmanifolds associated to the Lie algebra $\mathfrak h_7=(0,0,0,12,13,23)$ in the notation of Salamon \cite{salamon}, see \cite{rollenske-survey}.) This is done in Section \ref{sec:nilmanifolds-ranks}. Compare also \cite[Section 4.2]{fino-grantcharov-verbitsky}, where the same results have been obtained independently.

As a further example, we consider a non-K\"ahler manifold obtained as a torus-suspension in \cite{magnusson}.
The investigation of these manifolds was suggested by Valentino Tosatti and may deserve further studies in non-K\"ahler geometry.

\bigskip

\noindent{\sl Acknowledgments.}
The authors would like to thank Valentino Tosatti for useful discussions.
Many thanks also to the anonymous Referee for valuable comments.

\section{Hermitian ranks}
In this section, we recall the definitions of K\"ahler rank for compact complex surfaces by R. Harvey and H.~B. Lawson, and for compact complex manifolds by I. Chiose. On the same lines, we also introduce the notions of lcK rank and pluri-closed rank.

\subsection{K\"ahler rank}
Let $X$ be a compact complex surfaces.
Denote by $P_{\text{bdy}}(X)$ the cone of positive bidimension-$(1,1)$-currents on $X$ being a boundary, and denote by $P^\infty_{\text{bdy}}(X)$ the subcone of smooth currents.
On a compact complex surface $X$, the cone $P^\infty_{\text{bdy}}(X)$ coincides with the cone $P^\infty_{\text{bdy}_{1,1}}(X)$ of positive bidimension-$(1,1)$-currents on $X$ being component of a boundary, and any form $\varphi\in P^\infty_{\text{bdy}}(X)$ is simple, ({\itshape i.e.} of rank less or equal than one,) at every point of $X$, \cite[Proposition (37)]{harvey-lawson}.
Set:
$$ \mathcal B(X) \;:=\; \left\{ x \in X \;:\; \exists \varphi \in P^\infty_{\text{bdy}}(X) \text{ such that }\varphi_x\neq 0 \right\} \;. $$
The open subset $\mathcal B(X)\subseteq X$ carries an intrinsically defined complex analytic foliation $\mathcal F$, which is characterized by the property that $\varphi\lfloor_{\mathcal F}=0$ for any $\varphi\in P^\infty_{\text{bdy}}(X)$, \cite[Theorem 40]{harvey-lawson}.
The {\em K\"ahler rank} of the compact complex surface $X$ \cite[Definition 41]{harvey-lawson} is defined to be:
\begin{inparaenum}[\itshape (a)]
 \item two, when $X$ admits K\"ahler metrics; (that is, the open subset $\mathcal B(X)$ in $X$ is empty;)
 \item one, when the complement of the open subset $\mathcal B(X)$ in $X$ is contained in a complex curve and non-empty;
 \item zero, otherwise.
\end{inparaenum}

The K\"ahler rank of compact complex surfaces is a bimeromorphic invariant, \cite[Corollary 4.1]{chiose-toma}.
Surfaces with even first Betti number have K\"ahler rank two, \cite{lamari, buchdahl}.
Elliptic non-K\"ahler surfaces have K\"ahler rank one, \cite[page 187]{harvey-lawson}.
Non-elliptic non-K\"ahler surfaces are in class VII: under the GSS conjecture, their minimal model is one of the following:
\begin{inparaenum}[\itshape (i)]
  \item Inoue surfaces: K\"ahler rank is one, \cite[\S10]{harvey-lawson};
  \item Hopf surface: K\"ahler rank is one or zero according to the type, \cite[\S9]{harvey-lawson};
  \item Kato surfaces: K\"ahler rank is zero, \cite{chiose-toma}.
\end{inparaenum}

\medskip

Now, let $X$ be a compact complex manifold of complex dimension $n\geq2$. Notice that, when $n=2$, the K\"ahler rank, as in \cite[Definition 41]{harvey-lawson}, is equal to the maximal rank that a non-negative closed $(1,1)$-form may attain at some point of $X$, thanks to \cite[Corollary 4.3]{chiose-toma}. Then, the following definition by I. Chiose is coherent. Compare also \cite[Definition 1.2]{fino-grantcharov-verbitsky}.

\begin{defi}[{\cite[Definition 1.1]{chiose}}]
 Let $X$ be a compact complex manifold of complex dimension $n$. The {\em K\"ahler rank} of $X$ is defined to be
 $$ \mathrm{Kr}(X) \;:=\; \max \left\{ k\in\N \;:\; \exists \omega\in \wedge^{1,1}X \text{ s.t. } \omega\geq0,\; d\omega=0, \text{ and }\omega^k\neq 0 \right\} \;\in\; \left\{0,\ldots,n\right\} \;. $$
\end{defi}

\begin{rmk}
 Note that, by \cite[Theorem 0.2]{chiose}, for compact complex manifolds of complex dimension $3$ with maximal K\"ahler rank, the ``tamed-to-compatible'' conjecture, \cite[page 678]{li-zhang}, \cite[Question 1.7]{streets-tian}, holds.
\end{rmk}

\subsection{Locally conformally K\"ahler rank}
Now, we consider {\em locally conformally K\"ahler} structures \cite{dragomir-ornea}. Such a structure is given by $(\vartheta, \omega)$, where $\vartheta$ is a closed $1$-form and $\omega$ is a Hermitian metric satisfying $d_\vartheta\omega=0$, where
$$ d_\vartheta \;:=\; d-\vartheta\wedge \;. $$
Note that, locally, $\vartheta\stackrel{\text{loc}}{=}df$ for some smooth function $f$. Therefore $\exp(-f)\omega$ is a local K\"ahler structure being a locally conformally transformations of $\omega$.

We now admit degenerate metrics, and we introduce the following rank.
(We add the adjective ``Hermitian'' in order to avoid confusion with the notion of lcK rank introduced in \cite{gini-ornea-parton-piccinni}, which regards the Lee form $\vartheta$.)

\begin{defi}
 Let $X$ be a compact complex manifold of complex dimension $n$. The {\em Hermitian locally conformally K\"ahler rank} of $X$ is defined to be
 \begin{eqnarray}\label{eq:HlcK-rank}
  \mathrm{HlcKr}(X) &:=& \max \left\{ k\in\N \;:\; \exists \vartheta \in \wedge^1X \text{ s.t. } d\vartheta=0,\right.\\[5pt]
  &&\left.\exists\omega\in \wedge^{1,1}X \text{ s.t. } \omega\geq0,\; d_{\vartheta}\omega=0, \text{ and }\omega^k\neq 0 \right\} \nonumber\\[5pt]
  &\in& \left\{0,\ldots,n\right\} \;.\nonumber
 \end{eqnarray}
\end{defi}

Clearly, since $d_{0}=d$, it holds $\mathrm{Kr}(X)\leq\mathrm{HlcKr}(X)\leq \dim X$. Moreover, if $X$ admits a locally conformally K\"ahler metric, then clearly $\mathrm{HlcKr}(X)=\dim X$.

\subsection{Pluri-closed rank}
Finally, we consider {\em pluri-closed metrics} \cite{bismut}, namely, Hermitian metrics $\omega$ such that $\partial\overline\partial\omega=0$, also called {\em SKT metrics}. We define the following.

\begin{defi}\label{def:skt-rank}
 Let $X$ be a compact complex manifold of complex dimension $n$. The {\em pluri-closed rank} of $X$ is defined to be
 \begin{eqnarray}\label{eq:SKT-rank}
 \mathrm{SKTr}(X) &:=& \max \left\{ k\in\N \;:\; \exists \omega\in \wedge^{1,1}X \text{ s.t. } \omega\geq0,\; \partial\overline\partial\omega=0, \text{ and }\omega^k\neq 0 \right\} \\[5pt]
 &\in& \left\{0,\ldots,n\right\} \;.\nonumber
 \end{eqnarray}
\end{defi}

Note that, $\mathrm{Kr}(X)\leq\mathrm{SKT}(X)$. Moreover, by \cite[Théorème 1]{gauduchon}, any compact Hermitian manifold admit a unique {\em Gauduchon metric} in the conformal class up to scaling, that is, a metric $\omega$ satisfying $\partial\overline\partial\omega^{n-1}=0$, where $n=\dim X$. In particular, it follows that, on compact complex surfaces, the pluri-closed rank is always maximum, equal to $2$.

\section{Special-Hermitian ranks of homogeneous manifolds of solvable Lie groups}
In this section, we investigate the K\"ahler and the Hermitian lcK ranks of homogeneous manifolds of solvable Lie groups.

\medskip

Let $X = \left.\Gamma \middle\backslash G \right.$ be a solvmanifold, namely, a compact quotient of a connected simply-connected solvable Lie group $G$ by a co-compact discrete subgroup $\Gamma$. Assume that $X$ is endowed with an invariant complex structure, (that is, the complex structure is induced by a complex structure on $G$ being invariant with respect to the action of $G$ on itself given by left-translations.)
Then $\wedge^{\bullet,\bullet}\mathfrak{g}^\ast \hookrightarrow \wedge^{\bullet,\bullet}X$ is a sub-complex.

In the definition of special-Hermitian ranks, we can restrict to invariant metrics: set the {\em invariant K\"ahler rank} and the {\em invariant Hermitian lcK rank} to be
\begin{eqnarray*}
 \mathrm{Kr}(\mathfrak g) &:=& \max \left\{ k\in\N \;:\; \exists \omega\in \wedge^{1,1}\mathfrak{g}^\ast \text{ s.t. } \omega\geq0,\; d\omega=0, \text{ and }\omega^k\neq 0 \right\} \;, \\[5pt]
 \mathrm{HlcKr}(\mathfrak g) &:=& \max \left\{ k\in\N \;:\; \exists \vartheta \in \wedge^1\mathfrak{g}^\ast \text{ s.t. } d\vartheta=0,\right.\\[5pt]
  &&\left.\exists\omega\in \wedge^{1,1}\mathfrak{g}^\ast \text{ s.t. } \omega\geq0,\; d_{\vartheta}\omega=0, \text{ and }\omega^k\neq 0 \right\} \;.
\end{eqnarray*}
They have the advantage to be easier to be computed. In general, it holds
$$ \mathrm{Kr}(\mathfrak g) \;\leq\; \mathrm{Kr}(X) \qquad \text{ and } \qquad \mathrm{HlcKr}(\mathfrak g) \;\leq\; \mathrm{HlcKr}(X) \;. $$
In fact, we prove that equalities hold, under the assumption that the map $H^{\bullet,\bullet}_{\overline\partial}(\mathfrak g)\to H^{\bullet,\bullet}_{\overline\partial}(X)$ induced by the inclusion $\wedge^{\bullet,\bullet}\mathfrak{g}^\ast\to\wedge^{\bullet,\bullet}X$ is an isomorphism. The assumption holds true, {\itshape e.g.} when $G$ is nilpotent and the complex structure is either holomorphically-parallelizable, or Abelian, or nilpotent, or rational, \cite{console-survey, rollenske-survey}. It holds true also when $X$ is a compact complex surface being diffeomorphic to a solvmanifold, \cite{angella-dloussky-tomassini}.
 
\begin{lem}\label{lem:inv-ranks}
 Let $X=\Gamma\backslash G$ be a solvmanifold. Assume that the map $H^{\bullet,\bullet}_{\overline\partial}(\mathfrak g)\to H^{\bullet,\bullet}_{\overline\partial}(X)$ induced by the inclusion $\wedge^{\bullet,\bullet}\mathfrak{g}^\ast\to\wedge^{\bullet,\bullet}X$ is an isomorphism.
 Then the Hermitian locally conformally K\"ahler rank $\mathrm{HlcKr}(X)$ and the invariant Hermitian locally conformally K\"ahler rank $\mathrm{HlcKr}(\mathfrak g)$ are equal. In particular, the K\"ahler rank $\mathrm{Kr}(X)$ and the invariant K\"ahler rank $\mathrm{Kr}(\mathfrak{g})$ are equal.
\end{lem}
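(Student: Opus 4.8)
The plan is to establish the nontrivial inequality $\mathrm{HlcKr}(X)\le\mathrm{HlcKr}(\mathfrak g)$, the reverse one being the easy direction already recorded in the text (invariant forms are in particular global forms on $X$); the K\"ahler statement will then drop out as the special case $\vartheta=0$. The main tool is the symmetrization map obtained by averaging forms over $X$. Since a solvmanifold $X=\Gamma\backslash G$ forces $G$ to be unimodular, $G$ carries a bi-invariant volume form $dV$ with $\int_X dV=1$, and averaging
\[
\mu(\alpha)(Y_1,\dots,Y_k):=\int_X \alpha(Y_1,\dots,Y_k)\,dV
\]
over left-invariant vector fields $Y_1,\dots,Y_k$ defines a projection $\mu\colon\wedge^{\bullet,\bullet}X\to\wedge^{\bullet,\bullet}\mathfrak g^\ast$. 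I would first record its properties: $\mu$ restricts to the identity on invariant forms, it preserves the $(p,q)$-decomposition (the complex structure being invariant), it sends non-negative $(1,1)$-forms to non-negative invariant ones, and, crucially, it commutes with $d$. The last point is exactly where unimodularity enters: for left-invariant $Y$ one has $\int_X Y(f)\,dV=\int_X d(\iota_Y(f\,dV))=0$ by Stokes, so the derivative terms in the Cartan formula for $d\alpha$ average out while the bracket terms remain invariant.

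Next I would settle the rank bookkeeping, which is what turns a pointwise non-vanishing of $\omega^k$ into the invariant one. If $\omega\ge0$ and $\omega^k\neq0$ at some $x_0$, then $\mathrm{rank}\,\omega_{x_0}\ge k$. For any invariant $(1,0)$-vector $v$ in the kernel of the Hermitian form $\mu(\omega)$ one has $\int_X\omega(v,\bar v)\,dV=0$ with $\omega(v,\bar v)\ge0$ pointwise, whence $\omega_{x_0}(v,\bar v)=0$ and, $\omega_{x_0}$ being positive semidefinite, $v\in\ker\omega_{x_0}$. Thus $\ker\mu(\omega)\subseteq\ker\omega_{x_0}$ and $\mathrm{rank}\,\mu(\omega)\ge\mathrm{rank}\,\omega_{x_0}\ge k$, so $\mu(\omega)^k\neq0$. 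Combined with the previous paragraph this already yields $\mathrm{Kr}(\mathfrak g)\ge\mathrm{Kr}(X)$: symmetrizing a closed non-negative $\omega$ produces an invariant closed non-negative form of rank at least that of $\omega$.

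The genuinely harder point is the locally conformally K\"ahler case, and the obstacle is that $\mu$ is \emph{not} multiplicative, so naively symmetrizing $(\vartheta,\omega)$ only gives $d_{\mu(\vartheta)}\mu(\omega)=\mu(\vartheta\wedge\omega)-\mu(\vartheta)\wedge\mu(\omega)$, which need not vanish. To circumvent this I would first make the Lee form invariant by a conformal change. The hypothesis that $\mu$ is a Dolbeault quasi-isomorphism propagates, through the morphism of Fr\"olicher spectral sequences induced by $\wedge^{\bullet,\bullet}\mathfrak g^\ast\hookrightarrow\wedge^{\bullet,\bullet}X$, to an isomorphism $H^\bullet_{dR}(\mathfrak g)\xrightarrow{\sim}H^\bullet_{dR}(X)$; in degree one this lets me write $\vartheta=\tilde\vartheta+df$ with $\tilde\vartheta$ invariant, closed and real, and $f$ a real function. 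Replacing $\omega$ by the conformal metric $\omega':=\exp(-f)\,\omega$, which is still non-negative of the same pointwise rank, a direct computation gives $d_{\tilde\vartheta}\omega'=\exp(-f)\,d_\vartheta\omega=0$.

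Now the Lee form $\tilde\vartheta$ is invariant, and for invariant $\alpha$ one has the identity $\mu(\alpha\wedge\beta)=\alpha\wedge\mu(\beta)$, since the invariant factor pulls out of the average. Hence, setting $\tilde\omega:=\mu(\omega')$,
\[
d_{\tilde\vartheta}\tilde\omega=\mu(d\omega')-\tilde\vartheta\wedge\mu(\omega')=\mu(\tilde\vartheta\wedge\omega')-\tilde\vartheta\wedge\mu(\omega')=0,
\]
while $\tilde\omega\ge0$ and $\tilde\omega^k\neq0$ by the rank estimate. This exhibits $(\tilde\vartheta,\tilde\omega)$ as an invariant pair realizing rank $k$, giving $\mathrm{HlcKr}(\mathfrak g)\ge\mathrm{HlcKr}(X)$ and thus equality; the K\"ahler statement is the case $\vartheta=\tilde\vartheta=0$, where the conformal reduction is vacuous. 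The one step deserving real care is the passage from the Dolbeault hypothesis to the de Rham isomorphism used to find the invariant Lee representative; I would justify it by the standard comparison theorem for the induced morphism of the two bounded Fr\"olicher spectral sequences, an isomorphism on $E_1$ forcing an isomorphism on the abutment by finiteness of the filtration.
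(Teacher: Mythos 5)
Your proof is correct, and its skeleton --- upgrade the Dolbeault hypothesis to a de Rham isomorphism via the Fr\"olicher spectral sequence, replace $\vartheta$ by a cohomologous invariant Lee form, conformally rescale $\omega$ accordingly, then average --- coincides with the paper's. Where you genuinely diverge is in the final step, namely showing that the averaged form is $d_{\tilde\vartheta}$-closed with non-vanishing $k$-th power. The paper invokes Hattori's theorem to the effect that averaging induces the identity on the Morse--Novikov cohomology $H^{\bullet}_{d_{\hat\vartheta}}$, writes $\hat\omega=\tilde\omega+d_{\hat\vartheta}\alpha$, and then uses the Leibniz rule for the family of twisted differentials to get $[\hat\omega^k]=[\tilde\omega^k]$ in $H^{2k}_{d_{k\hat\vartheta}}(X)$, concluding $\hat\omega^k=\mu(\tilde\omega^k)\neq0$ from the positivity of $\tilde\omega^k$. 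You instead rely on two elementary properties of the averaging map --- $\mu\circ d=d\circ\mu$ (unimodularity plus the Cartan formula, exactly as you argue) and $\mu(\alpha\wedge\beta)=\alpha\wedge\mu(\beta)$ for invariant $\alpha$ --- so that $\mu$ commutes with $d_{\tilde\vartheta}$ outright once the Lee form has been made invariant, and you obtain $\mu(\omega')^k\neq0$ from the pointwise kernel comparison $\ker\mu(\omega')\subseteq\ker\omega'_{x_0}$ rather than from a cohomological identity. Your route is more self-contained (the only cohomological input is the degree-one statement needed to find the invariant representative of $[\vartheta]$, whereas the paper also needs the twisted cohomology in degree two and the product $H^2_{d_{\hat\vartheta}}\times H^2_{d_{\hat\vartheta}}\to H^4_{d_{2\hat\vartheta}}$), and the rank comparison gives the slightly sharper statement $\mathrm{rank}\,\mu(\omega')\geq\max_{x}\mathrm{rank}\,\omega'_{x}$; the paper's cohomological formulation is the one it later contrasts with the pluri-closed case, where the absence of a Leibniz rule for $\partial\overline\partial$ is precisely what makes the analogous reduction fail. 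Both arguments are complete and yield the same conclusion, with the K\"ahler case recovered by taking $\vartheta=0$.
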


\begin{proof}
 Clearly, $\mathrm{HlcKr}(\mathfrak g) \leq \mathrm{HlcKr}(X)$.
 Let $\vartheta$ be a $d$-closed $1$-form, and $\omega\geq0$ be a $(1,1)$-form satisfying $d_\vartheta\omega=0$ such that $\omega^k\neq0$.
 We show that there exist an invariant $d$-closed $1$-form $\hat\vartheta$ and an invariant $(1,1)$-form $\hat\omega\geq 0$ satisfying $d_{\hat\vartheta}\hat\omega$ such that $\hat\omega^k\neq0$.
 
 By the assumption and by the Fr\"olicher spectral sequence, the average map
 $$ \mu\colon \wedge^{\bullet,\bullet}X \to \wedge^{\bullet,\bullet}\mathfrak{g}^\ast \;,\qquad \mu(\alpha)\;:=\;\int_X \alpha\lfloor_m\,\eta(m) $$
 (here, $\eta$ is a bi-invariant volume forms, thanks to Milnor,) induces the identity in de Rham cohomology.
 In particular, $\hat\vartheta:=\mu(\vartheta)$ is an invariant $d$-closed $1$-form, and there exists $f$ smooth function such that $\hat\vartheta=\vartheta+df$.
 
 Note that $d_{\vartheta+df} = \exp(f) \cdot d_\vartheta (\exp(-f)\cdot \sspace)$.
 Therefore $\tilde\omega:=\exp(f)\cdot \omega\geq 0$ is a $(1,1)$-form satisfying $d_{\hat\vartheta}\tilde\omega=0$ such that $\tilde\omega^k=\exp(kf)\omega^k\neq0$. In particular, $[\tilde\omega]\in H^{2}_{d_{\hat\vartheta}}(X)$.
 By \cite{hattori}, the average map $\mu$ induces the identity also in the cohomology of the twisted differential $d_{\hat\vartheta}$.
 So we get that $\hat\omega:=\mu(\tilde\omega)\geq0$ is an invariant $(1,1)$-form satisfying $d_{\hat\vartheta}\hat\omega=0$, and there exists $\alpha$ a $1$-form such that $\hat\omega=\tilde\omega+d_{\hat\vartheta}\alpha$.
 
 Moreover, we have
 $$ \hat\omega^k \;=\; \omega^k + d_{k\hat\vartheta} \varphi \qquad \text{ where } \varphi \;:=\; \sum_{\substack{s+t=k\\t\geq1}} {k \choose s} \cdot \tilde\omega^s\wedge \alpha\wedge (d_{(t-1)\hat\vartheta}\alpha)^{t-1} \;. $$
 That is, $[\hat\omega^k]=[\tilde\omega^k]$ in $H^{2k}_{d_{k\hat\vartheta}}(X)$. Therefore, since $\hat\omega^k$ is invariant, it follows that $\mu(\tilde\omega^k)=\hat\omega^k$.
 Since $\tilde\omega^k\geq0$ and $\tilde\omega^k\neq0$, then $\hat\omega^k\neq0$.
 
 As for the case of K\"ahler rank, it suffices to note that $[\vartheta]=0$ if and only if $[\hat\vartheta]=0$.
\end{proof}

As a direct consequence, we have the following.

\begin{cor}
 Let $X=\Gamma\backslash G$ be a solvmanifold. Assume that the map $H^{\bullet,\bullet}_{\overline\partial}(\mathfrak g)\to H^{\bullet,\bullet}_{\overline\partial}(X)$ induced by the inclusion $\wedge^{\bullet,\bullet}\mathfrak{g}^\ast\to\wedge^{\bullet,\bullet}X$ is an isomorphism.
 Then the K\"ahler rank $\mathrm{Kr}(X)$, (respectively, the Hermitian locally conformally K\"ahler rank $\mathrm{HlcKr}(X)$,) is maximum if and only if there exists a Hermitian metric being K\"ahler, (respectively, locally conformally K\"ahler.)
\end{cor}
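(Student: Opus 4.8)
The plan is to deduce the statement from Lemma~\ref{lem:inv-ranks} together with the elementary observation that, for an \emph{invariant} non-negative $(1,1)$-form, attaining a nonzero top power at one point forces positive-definiteness everywhere. I would treat the two cases (K\"ahler and Hermitian lcK) in parallel, each as a biconditional, with $n=\dim X$.

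For the ``if'' directions, I would note that these are immediate from the definitions. If $X$ carries a K\"ahler metric $\omega$, then $\omega$ is a positive-definite closed $(1,1)$-form, so $\omega^n\neq0$ and hence $\mathrm{Kr}(X)=n$ is maximum; likewise, if $X$ carries an lcK structure $(\vartheta,\omega)$, then $\omega$ is positive definite with $d_\vartheta\omega=0$ and $\vartheta$ closed, so $\omega^n\neq0$ and $\mathrm{HlcKr}(X)=n$ is maximum.

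For the ``only if'' directions, I would first invoke Lemma~\ref{lem:inv-ranks} to reduce to the Lie algebra. Assuming $\mathrm{Kr}(X)=n$, the Lemma gives $\mathrm{Kr}(\mathfrak g)=n$, hence an \emph{invariant} form $\omega\in\wedge^{1,1}\mathfrak g^\ast$ with $\omega\geq0$, $d\omega=0$, and $\omega^n\neq0$. The key step is then that $\omega$, being invariant, is represented at every point of $X$ by one and the same non-negative Hermitian form $\omega_e\in\wedge^{1,1}\mathfrak g^\ast$ under the left-invariant trivialization of the tangent bundle. Since a non-negative Hermitian form on an $n$-dimensional complex vector space has nonvanishing top exterior power if and only if it is positive definite, $\omega_e$ is positive definite, and by invariance $\omega$ is positive definite at every point. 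Thus $\omega$ is a genuine Hermitian metric, and as $d\omega=0$ it is K\"ahler. The lcK case is identical: $\mathrm{HlcKr}(X)=n$ yields, via the Lemma, an invariant closed $1$-form $\vartheta$ and an invariant $\omega\geq0$ with $d_\vartheta\omega=0$ and $\omega^n\neq0$, and the same positivity argument shows $\omega$ is a Hermitian metric, so $(\vartheta,\omega)$ is an lcK structure.

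The only point requiring care---and the main, albeit mild, obstacle---is the passage from ``$\omega^n\neq0$ at a point'' to ``$\omega$ positive definite everywhere''. In the non-invariant setting this implication fails, which is precisely why one cannot argue directly on $X$: a non-negative closed $(1,1)$-form may be positive at some points and degenerate at others. It is invariance, supplied by Lemma~\ref{lem:inv-ranks}, that makes the pointwise rank constant and thereby upgrades ``maximal rank somewhere'' to ``honest metric everywhere''.
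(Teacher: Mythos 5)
Your proof is correct and follows exactly the route the paper intends: the corollary is stated there as a ``direct consequence'' of Lemma~\ref{lem:inv-ranks}, and the intended argument is precisely your reduction to an invariant form followed by the observation that an invariant non-negative $(1,1)$-form with nonvanishing top power is positive definite everywhere, hence a genuine (K\"ahler, respectively lcK) metric. Nothing is missing.
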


\section{Special-Hermitian ranks of \texorpdfstring{$6$}{6}-dimensional nilmanifolds}\label{sec:nilmanifolds-ranks}
By using Lemma \ref{lem:inv-ranks}, we can compute the K\"ahler and Hermitian lcK ranks of $6$-dimensional nilmanifolds with invariant complex structures, except possibly for the nilmanifolds associated to the Lie algebra $\mathfrak h_7=(0,0,0,12,13,23)$ in the notation of Salamon \cite{salamon}.
In fact, the assumption of the map $H^{\bullet,\bullet}_{\overline\partial}(\mathfrak g)\to H^{\bullet,\bullet}_{\overline\partial}(X)$ induced by the inclusion being an isomorphism is satisfied, see \cite{console-survey, rollenske-survey}.

\medskip

It is well-known \cite{ugarte, ceballos-otal-ugarte-villacampa} that, up to (linear-)equivalence, the invariant complex structures on $6$-dimensional nilmanifolds are parametrized into the following families: there exists a global co-frame $\{\varphi^1,\varphi^2,\varphi^3\}$ of invariant $(1,0)$-forms such that the structure equations are
\begin{description}
 \item[(P)] $d\varphi^1=0$, $d\varphi^2=0$, $d\varphi^3=\rho\varphi^1\wedge\varphi^2$,\\
 where $\rho\in\{0,1\}$;
 \item[(I)] $d\varphi^1=0$, $d\varphi^2=0$, $d\varphi^3=\rho\varphi^1\wedge\varphi^2+\varphi^1\wedge\bar\varphi^1+\lambda\varphi^1\wedge\bar\varphi^2+D\varphi^2\wedge\bar\varphi^2$,\\
 where $\rho\in\{0,1\}$, $\lambda\in\R^{\geq0}$, $D\in\C$ with $\Im D\geq 0$;
 \item[(II)] $d\varphi^1=0$, $d\varphi^2=\varphi^1\wedge\bar\varphi^1$, $d\varphi^3=\rho\varphi^1\wedge\varphi^2+B\varphi^1\wedge\bar\varphi^2+c\varphi^2\wedge\bar\varphi^1$,\\
 where $\rho\in\{0,1\}$, $B\in\C$, $c\in\R^{\geq0}$, with $(\rho,B,c)\neq(0,0,0)$;
 \item[(III)] $d\varphi^1=0$, $d\varphi^2=\varphi^1\wedge\varphi^3+\varphi^1\wedge\bar\varphi^3$, $d\varphi^3=\varepsilon\varphi^1\wedge\bar\varphi^1\pm i\left(\varphi^1\wedge\bar\varphi^2-\varphi^2\wedge\bar\varphi^1\right)$,\\
 where $\varepsilon\in\{0,1\}$.
\end{description}

\subsection{K\"ahler rank of \texorpdfstring{$6$}{6}-dimensional nilmanifolds}
As for the K\"ahler rank, we have the following.

\begin{prop}\label{prop:kahler-rank-6-nilmfd}
 On $6$-dimensional nilmanifolds endowed with invariant complex structures, (except possibly for the nilmanifolds associated to the Lie algebra $\mathfrak h_7$,) the K\"ahler rank takes the following values:
 \begin{eqnarray*}
  \text{\normalfont\bfseries (P):}&\mathrm{Kr}(X)\;=\;& 3 \quad \text{ if } \rho=0 \;, \\[5pt]
  &\mathrm{Kr}(X)\;=\;& 2 \quad \text{ if } \rho=1 \;; \\[5pt]
  \text{\normalfont\bfseries (I):}&\mathrm{Kr}(X)\;=\;& 2 \;; \\[5pt]
  \text{\normalfont\bfseries (II):}&\mathrm{Kr}(X)\;=\;& 1 \quad \text{ if } (\rho,B,c)\neq(1,1,0)\;, \\[5pt]
  &\mathrm{Kr}(X)\;\geq\;& 1 \quad \text{ if } (\rho,B,c)=(1,1,0)\;; \\[5pt]
  \text{\normalfont\bfseries (III):}&\mathrm{Kr}(X)\;=\;& 1 \;;
\end{eqnarray*}
\end{prop}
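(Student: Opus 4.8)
The plan is to reduce the whole computation to a finite-dimensional linear-algebra problem on the Lie algebra $\mathfrak g$ and then transfer it back to $X$ via Lemma \ref{lem:inv-ranks}. By the cited surveys \cite{console-survey, rollenske-survey}, the inclusion $\wedge^{\bullet,\bullet}\mathfrak g^\ast\hookrightarrow\wedge^{\bullet,\bullet}X$ induces an isomorphism in $\overline\partial$-cohomology for every $6$-dimensional nilmanifold with invariant complex structure, with the sole possible exception of $\mathfrak h_7$; as I will check, $\mathfrak h_7$ is realized precisely by the structure \textbf{(II)} with $(\rho,B,c)=(1,1,0)$. Hence Lemma \ref{lem:inv-ranks} gives $\mathrm{Kr}(X)=\mathrm{Kr}(\mathfrak g)$ in every case but that one, while for $\mathfrak h_7$ only the automatic inequality $\mathrm{Kr}(\mathfrak g)\leq\mathrm{Kr}(X)$ survives, which already yields the stated bound $\mathrm{Kr}(X)\geq 1$. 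It therefore suffices to compute the invariant rank $\mathrm{Kr}(\mathfrak g)$ in each family.

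To compute $\mathrm{Kr}(\mathfrak g)$ I write a general invariant non-negative $(1,1)$-form in the coframe as $\omega=i\sum_{j,k}h_{j\bar k}\,\varphi^j\wedge\bar\varphi^k$, with $H=(h_{j\bar k})$ a positive-semidefinite Hermitian matrix, and I use that $\omega^\ell\neq 0$ if and only if $\mathrm{rank}\,H\geq\ell$; so $\mathrm{Kr}(\mathfrak g)$ is the largest rank of such an $H$ compatible with $d\omega=0$. In each family $d\varphi^j$ has components only in bidegrees $(2,0)$ and $(1,1)$, so $d\omega$ has only a $(2,1)$- and a conjugate $(1,2)$-part, and $d\omega=0$ is equivalent to the vanishing of the $(2,1)$-component; differentiating with the structure equations and collecting the coefficients of the basis $(2,1)$-forms $\varphi^{ab\bar c}$ produces a linear system in the $h_{j\bar k}$. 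For the lower bounds I exhibit explicit closed forms: since $\varphi^1$ is closed in every family, $i\,\varphi^1\wedge\bar\varphi^1$ is closed, non-negative and of rank one, giving $\mathrm{Kr}(\mathfrak g)\geq 1$ throughout; and in \textbf{(P)} and \textbf{(I)}, where $\varphi^2$ is closed as well, any positive-semidefinite form supported on $\langle\varphi^1,\varphi^2\rangle$ is closed and can be taken of rank two, giving $\mathrm{Kr}(\mathfrak g)\geq 2$.

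The upper bounds come from the same system, the decisive point being always that a \emph{diagonal} entry of $H$ is forced to vanish, after which positive-semidefiniteness propagates the vanishing to an entire row and column and caps the rank. In \textbf{(P)} with $\rho=0$ all $\varphi^j$ are closed (the torus), every constant $H$ is admissible, and $\mathrm{Kr}=3$; with $\rho=1$ the $(2,1)$-equations force $h_{3\bar1}=h_{3\bar2}=h_{3\bar3}=0$, so the third row and column vanish and $\mathrm{rank}\,H\leq 2$. In \textbf{(I)} the summand $\varphi^1\wedge\bar\varphi^1$ of $d\varphi^3$ forces $h_{3\bar3}=0$, hence the third row and column vanish, leaving a $2\times 2$ block and $\mathrm{rank}\,H\leq 2$; with the lower bound this gives $\mathrm{Kr}=2$. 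In \textbf{(II)} and \textbf{(III)} the equations force both $h_{3\bar3}=0$ and $h_{2\bar2}=0$ (directly, or through the relations coupling these diagonal entries to the off-diagonal ones), so that $H$ collapses to the single surviving entry $h_{1\bar1}$, whence $\mathrm{rank}\,H\leq 1$ and $\mathrm{Kr}=1$. The non-K\"ahlerness of the underlying non-toral nilmanifolds (Benson--Gordon, Hasegawa) provides an independent check of the upper bounds $\mathrm{Kr}\leq 2$ in \textbf{(P)} with $\rho=1$ and in \textbf{(I)}, since $\mathrm{Kr}=3$ is equivalent to admitting a K\"ahler metric.

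The main obstacle is twofold. First, the closedness computation must be carried out attentively for each set of structure equations: one has to keep the $(2,0)$- and $(1,1)$-parts of $d\varphi^3$ (and of $d\varphi^2$ in \textbf{(II)}, \textbf{(III)}) separate, and then argue the propagation of vanishing through semidefiniteness. Second, and more delicate, is the identification of the exceptional parameter: I must verify that \textbf{(II)} with $(\rho,B,c)=(1,1,0)$ is exactly the complex structure on $\mathfrak h_7$ and that no other family or parameter meets $\mathfrak h_7$. The identification runs through the closed real forms $\varphi^1$ and $\varphi^2+\bar\varphi^2$, which rewrite $d\varphi^2=\varphi^1\wedge\bar\varphi^1$ and $d\varphi^3=\varphi^1\wedge(\varphi^2+\bar\varphi^2)$ as wedges of closed forms; this shows the structure is $2$-step with $b_1=3$, hence the free $2$-step nilpotent Lie algebra on three generators, namely $\mathfrak h_7$. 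One then checks that \textbf{(P)} and \textbf{(I)} have $b_1=4$, that \textbf{(III)} is $3$-step, and that within \textbf{(II)} the $2$-step condition holds only for $(1,1,0)$, so that Lemma \ref{lem:inv-ranks} does apply in all remaining cases and the invariant computation transfers verbatim to $X$.
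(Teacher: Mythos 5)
Your proposal is correct and follows essentially the same route as the paper's proof: reduction to invariant forms via Lemma \ref{lem:inv-ranks}, parametrization of the general invariant non-negative $(1,1)$-form, and the family-by-family computation of $\partial\omega$, with positive-semidefiniteness propagating the forced vanishing of diagonal entries to entire rows and columns so as to cap the rank. Your only additions --- the explicit identification of the exceptional case \textbf{(II)} with $(\rho,B,c)=(1,1,0)$ as $\mathfrak h_7$ and the independent non-K\"ahlerness check --- are points the paper delegates to the cited classification \cite{ugarte, ceballos-otal-ugarte-villacampa}.
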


\begin{proof}
Thanks to Lemma \ref{lem:inv-ranks}, we are reduced to compute the invariant ranks.
The arbitrary invariant $(1,1)$-form $\omega$ such that $\omega\geq0$ is
\begin{eqnarray}\label{eq:generic-metric}
 \omega &=& ir^2\varphi^{1}\wedge\bar\varphi^1 + is^2\varphi^{2}\wedge\bar\varphi^2 + it^2\varphi^{3}\wedge\bar\varphi^3 \\[5pt]
  \nonumber
  &&+\left(u\varphi^1\wedge\bar\varphi^2-\bar u\varphi^2\wedge\bar\varphi^1\right)
  +\left(v\varphi^2\wedge\bar\varphi^3-\bar v\varphi^3\wedge\bar\varphi^2\right)
  +\left(z\varphi^1\wedge\bar\varphi^3-\bar z\varphi^3\wedge\bar\varphi^1\right) \;,
\end{eqnarray}
where $r,s,t\in\R$, $u,v,z\in\C$ satisfy
\begin{eqnarray}
\label{eq:condition-generic-metric}
&&r^2 \;\geq\; 0\;,\qquad
s^2 \;\geq\; 0\;,\qquad
t^2 \;\geq\; 0\;,\\[5pt]
\nonumber
&&r^2s^2 \;\geq\; |u|^2\;,\qquad
s^2t^2 \;\geq\; |v|^2\;,\qquad
r^2t^2 \;\geq\; |z|^2\;,\\[5pt]
\nonumber
&&r^2s^2t^2+2\Re(i\bar u \bar v z) \;\geq\; t^2|u|^2+r^2|v|^2+s^2|z|^2 \;.
\end{eqnarray}

We have
\begin{eqnarray*}
\frac12\omega^2 &=&
 (r^2s^2-|u|^2) \, \varphi^{12\bar1\bar2}
+(-ir^2v-\bar uz) \, \varphi^{12\bar1\bar3}
+(is^2z-uv) \, \varphi^{12\bar2\bar3} \\[5pt]
&&+(ir^2\bar v-u\bar z) \, \varphi^{13\bar1\bar2}
+(r^2t^2-|z|^2) \, \varphi^{13\bar1\bar3}
+(-it^2u-\bar v z) \, \varphi^{13\bar2\bar3} \\[5pt]
&&+(-is^2\bar z-\bar u \bar v) \, \varphi^{23\bar1\bar2}
+(it^2\bar u-v\bar z) \, \varphi^{23\bar1\bar3}
+(s^2t^2-|v|^2) \, \varphi^{23\bar2\bar3}
\end{eqnarray*}
(for simplicity of notation, we shorten, {\itshape e.g.} $\varphi^{12\bar2\bar3}:=\varphi^{1}\wedge\varphi^{2}\wedge\bar\varphi^{2}\wedge\bar\varphi^{3}$,) and
\begin{eqnarray*}
\frac16\omega^3 &=&
 (ir^2s^2t^2-ir^2|v|^2-is^2|z|^2-it^2|u|^2+uv\bar z-\bar u \bar v z) \, \varphi^{123\bar1\bar2\bar3} \;.
\end{eqnarray*}

We compute:
\begin{eqnarray*}
 \text{\bfseries(P):}&
     \quad\partial\omega \;=\;&
     -\bar z\rho \, \varphi^{12\bar1}
     -\bar v\rho \, \varphi^{12\bar2}
     +i t^2 \rho \, \varphi^{12\bar3} \;; \\[5pt]
 \text{\bfseries(I):}&
     \quad\partial\omega \;=\;&
     (-v+\lambda z-\rho \bar z) \, \varphi^{12\bar1}
     + (-\bar v \rho + z\bar D) \, \varphi^{12\bar2}
     + (it^2\rho) \, \varphi^{12\bar3} \\[5pt]
     && + (-it^2) \, \varphi^{13\bar1}
     + (-it^2\lambda) \, \varphi^{23\bar1}
     + (-it^2\bar D) \, \varphi^{23\bar2}
     \;; \\[5pt]
 \text{\bfseries(II):}&
     \quad\partial\omega \;=\;&
     \left(-is^2+z\bar B-\bar z\rho\right) \, \varphi^{12\bar1}
     + \left(-cv-\bar v\rho\right) \, \varphi^{12\bar2}
     + \left(it^2\rho\right) \, \varphi^{12\bar3} \\[5pt]
     && + \left(\bar v\right) \, \varphi^{13\bar1}
     + \left(-it^2 c\right) \, \varphi^{13\bar2}
     + \left(-it^2\bar B\right) \, \varphi^{23\bar1}
     \;; \\[5pt]
 \text{\bfseries(III):}&
     \quad\partial\omega \;=\;&
     (\mp iz-\varepsilon v)\,\varphi^{12\bar1}
     + (\mp iv)\,\varphi^{12\bar2}
     + (u-\bar u-it^2\varepsilon)\,\varphi^{13\bar1} \\[5pt]
     && + (is^2\pm t^2)\,\varphi^{13\bar2}
     + (v)\,\varphi^{13\bar3}
     + (is^2\mp t^2)\,\varphi^{23\bar1}
     \;.
\end{eqnarray*}
The statement follows.
\end{proof}

\begin{rmk}
 The results in Proposition \ref{prop:kahler-rank-6-nilmfd} have been obtained independently in \cite[Section 4.2]{fino-grantcharov-verbitsky}.
\end{rmk}

\subsection{Hermitian locally conformally K\"ahler rank of \texorpdfstring{$6$}{6}-dimensional nilmanifolds}
As for the Hermitian lcK rank, we have the following.

\begin{prop}
 On $6$-dimensional nilmanifolds endowed with invariant complex structures, (except possibly for the nilmanifolds associated to the Lie algebra $\mathfrak h_7$,) the Hermitian locally conformally K\"ahler rank takes the following values:
 \begin{eqnarray*}
  \text{\normalfont\bfseries (P):}&\mathrm{HlcKr}(X)\;=\;& 3 \quad \text{ if } \rho=0 \;, \\[5pt]
  &\mathrm{HlcKr}(X)\;=\;& 2 \quad \text{ if } \rho=1 \;; \\[5pt]
  \text{\normalfont\bfseries (I):}&\mathrm{HlcKr}(X)\;=\;& 3 \quad\text{ if } (\rho,\lambda,D)=(0,0,-1) \;, \\[5pt]
  &\mathrm{HlcKr}(X)\;=\;& 2 \quad\text{ if } (\rho,\lambda,D)\neq(0,0,-1)\;; \\[5pt]
  \text{\normalfont\bfseries (II):}&\mathrm{HlcKr}(X)\;=\;& 2 \;; \\[5pt]
  \text{\normalfont\bfseries (III):}&\mathrm{HlcKr}(X)\;=\;& 1 \;;
\end{eqnarray*}
\end{prop}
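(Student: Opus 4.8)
The plan is to run the same scheme as in Proposition~\ref{prop:kahler-rank-6-nilmfd}. By Lemma~\ref{lem:inv-ranks} the Hermitian locally conformally K\"ahler rank equals the invariant rank $\mathrm{HlcKr}(\mathfrak g)$, so I may work with the generic invariant non-negative $(1,1)$-form $\omega$ of \eqref{eq:generic-metric}, its powers $\omega^2,\omega^3$ and its differential $\partial\omega$, all already computed there under the positivity conditions \eqref{eq:condition-generic-metric}. The one new ingredient is the Lee form: writing an invariant real $1$-form as $\vartheta=\alpha+\bar\alpha$ with $\alpha=\vartheta^{1,0}=a_1\varphi^1+a_2\varphi^2+a_3\varphi^3$, the equation $d_\vartheta\omega=0$ splits by bidegree into the $(2,1)$-equation $\partial\omega=\alpha\wedge\omega$ and its conjugate; since $\omega$ and $\vartheta$ are real the $(1,2)$-part is the conjugate of the $(2,1)$-part, so it suffices to impose $\partial\omega=\alpha\wedge\omega$. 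Thus everything reduces to two linear inputs: the closedness constraints $d\vartheta=0$ on $(a_1,a_2,a_3)$, and the products $\varphi^i\wedge\omega$, from which $\alpha\wedge\omega$ is assembled and matched coefficient-by-coefficient against the known $\partial\omega$.

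First I would record, for each family, the constraints on $\alpha$ imposed by $d\vartheta=0$. Expanding by bidegree and using the structure equations, the $(2,0)$-part forces $a_3\rho=0$ in (P), (I), (II) and $a_2=0$ in (III), while the $(1,1)$-part imposes the remaining reality and linear conditions: $a_3$ real with $a_3\lambda=a_3\,\Im D=0$ in (I); $a_2$ real with $a_3 B=\bar a_3 c$ in (II); and $a_3$ real in (III). The components left unconstrained (always $a_1$, and also $a_2$ in (P), (I)) stay free. Reading off $\alpha\wedge\omega=a_1\varphi^1\wedge\omega+a_2\varphi^2\wedge\omega+a_3\varphi^3\wedge\omega$ and equating with $\partial\omega$ then yields, in each family, a system in the metric parameters $r,s,t,u,v,z$ and the Lee coefficients $a_i$, to be solved under \eqref{eq:condition-generic-metric}.

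The lower bounds are the easy half. Since $\mathrm{Kr}\leq\mathrm{HlcKr}$, Proposition~\ref{prop:kahler-rank-6-nilmfd} already gives $\mathrm{HlcKr}\geq3$ for (P) with $\rho=0$, $\mathrm{HlcKr}\geq2$ for (P) with $\rho=1$ and for (I), and $\mathrm{HlcKr}\geq1$ for (II), (III). Only two genuinely new lower bounds remain, and both are realised by explicit degenerate metrics with $u=v=z=0$: for (II) the diagonal $\omega=ir^2\varphi^{1\bar1}+is^2\varphi^{2\bar2}$ (so $t=0$) together with $\vartheta=a_2(\varphi^2+\bar\varphi^2)$ and $a_2=s^2/r^2$ is closed and satisfies $\partial\omega=\alpha\wedge\omega$ for every $(\rho,B,c)$, giving rank $2$; and for (I) at the distinguished value of $(\rho,\lambda,D)$ the diagonal positive-definite $\omega=ir^2\varphi^{1\bar1}+is^2\varphi^{2\bar2}+it^2\varphi^{3\bar3}$ with $\vartheta=a_3(\varphi^3+\bar\varphi^3)$, the tuple $(r,s,t,a_3)$ fixed by the two surviving diagonal equations, is a (non-degenerate) locally conformally K\"ahler metric, giving rank $3$.

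The upper bounds carry the weight and are the main obstacle. Everywhere the mechanism is the same: attaining a given rank forces strictly positive diagonal parameters in \eqref{eq:condition-generic-metric}---rank $3$ makes $\omega$ positive definite, so in particular $t^2>0$, while rank $2$ makes some $2\times2$ minor nonzero---and feeding this into the matched equations together with the closedness constraints pins down $\alpha$. For excluding rank $3$ in (P), (I), (II) the decisive equations are the vanishing $\varphi^{13\bar3}$ and $\varphi^{23\bar3}$ coefficients of $\alpha\wedge\omega$, namely $ia_1t^2=a_3z$ and $ia_2t^2=a_3v$: when the closedness constraints force $a_3=0$ these give $a_1=a_2=0$, hence $\vartheta=0$ and $\omega$ K\"ahler, contradicting the value of $\mathrm{Kr}$ from Proposition~\ref{prop:kahler-rank-6-nilmfd}; when $a_3\neq0$ is permitted, substituting $z,v$ into the remaining equations produces a sign contradiction among the real quantities $r^2,s^2,t^2,a_3$ through the inequalities $r^2t^2\geq|z|^2$ and $s^2t^2\geq|v|^2$. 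The delicate sub-cases are exactly those where $d\vartheta=0$ leaves $a_3$ free---family (I) with $\rho=\lambda=\Im D=0$, which is precisely where rank $3$ does occur and must be separated from its neighbours by the exact relation tying $D$ to the ratio $s^2/r^2$, and the sub-cases of (II) with $\rho=0$ satisfying $a_3B=\bar a_3c$---while (III), where only rank $2$ must be ruled out, is handled using $a_2=0$ and the $\omega^2$ formula to force every $2\times2$ minor to vanish. Organising all of this into a finite, contradiction-free checklist is the crux of the argument.
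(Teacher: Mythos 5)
Your overall strategy --- reduce to invariant data via Lemma~\ref{lem:inv-ranks}, split $d_\vartheta\omega=0$ by bidegree into $\partial\omega=\vartheta^{1,0}\wedge\omega$ plus its conjugate, and match coefficients against the $\partial\omega$ already computed for Proposition~\ref{prop:kahler-rank-6-nilmfd} --- is legitimate, your lists of $d\vartheta=0$ constraints are correct, and your family \textbf{(II)} example is exactly the paper's ($\vartheta=\varphi^2+\bar\varphi^2$, $\Omega=i\varphi^{1\bar1}+i\varphi^{2\bar2}$). But the route differs genuinely from the paper's on the upper bounds: the paper invokes Sawai's classification \cite{sawai} (a non-toral nilmanifold with invariant complex structure admits an lcK metric only for $\mathfrak h_3$ with its distinguished complex structure), combined with the corollary that $\mathrm{HlcKr}(X)=3$ is equivalent to the existence of an honest lcK metric; this disposes of $\mathrm{HlcKr}\le 2$ in families \textbf{(P)}, \textbf{(I)}, \textbf{(II)} by citation, leaving only the bound $\mathrm{HlcKr}\le 1$ in family \textbf{(III)}, which the paper checks computationally (with Sage). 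Your plan replaces that citation by the full case analysis that you yourself call ``the crux'' and do not carry out; in particular the family \textbf{(III)} computation forcing $r^2=s^2=0$, the one step the paper also could not avoid, is only gestured at.

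There is moreover a concrete failure. For family \textbf{(I)} at the distinguished parameters you propose the diagonal positive-definite $\omega=i(r^2\varphi^{1\bar1}+s^2\varphi^{2\bar2}+t^2\varphi^{3\bar3})$ with $\vartheta=a_3(\varphi^3+\bar\varphi^3)$. Matching against the paper's $\partial\omega$ for \textbf{(I)} with $\rho=\lambda=0$ and $u=v=z=0$, the two surviving equations are the $\varphi^{13\bar1}$- and $\varphi^{23\bar2}$-coefficients, namely $-it^2=-ia_3r^2$ and $-it^2\bar D=-ia_3s^2$, i.e.\ $a_3=t^2/r^2$ and $a_3=\bar D\,t^2/s^2$. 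At $D=-1$ these force $a_3>0$ and $a_3<0$ simultaneously, so the tuple $(r,s,t,a_3)$ you claim is ``fixed by the two surviving diagonal equations'' does not exist; and the inequalities $r^2t^2\ge|z|^2$, $s^2t^2\ge|v|^2$ rule out non-diagonal rescues as well, since eliminating $a_1,a_2$ via $ia_1t^2=a_3z$, $ia_2t^2=a_3v$ yields $t^4=a_3(r^2t^2-|z|^2)$ and $\bar D\,t^4=a_3(s^2t^2-|v|^2)$, forcing $\Re D>0$. The construction does go through when $d\varphi^3=\varphi^{1\bar1}+\varphi^{2\bar2}$, i.e.\ at $D=+1$; so either your example must be replaced or the sign of $D$ singled out in the statement needs to be re-examined against the conventions of \cite{sawai} and of the classification. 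Either way, as written your argument does not establish the rank-$3$ lower bound for family \textbf{(I)}.
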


\begin{proof}
By \cite[Main Theorem]{sawai}, a non-toral compact nilmanifold with a left-invariant complex structure has a locally conformally K\"ahler structure if and only if it is biholomorphic to a quotient of the Heisenberg group times $\R$. In particular, the only $6$-dimensional non-Abelian nilpotent Lie algebra admitting lcK structures is $\mathfrak{h}_3$, which appears in family {\bfseries (I)} with parameters $\rho=0$, $\lambda=0$, $D=-1$.

In case {\bfseries (II)}, consider the $d$-closed $1$-form $\vartheta:=\varphi^2+\bar\varphi^2$ and the $d_\vartheta$-closed $2$-form, $\Omega:=i\,\varphi^1\wedge\bar\varphi^1+i\,\varphi^2\wedge\bar\varphi^2\geq0$.

In case {\bfseries (III)}, the arbitrary $d$-closed $1$-form is $\vartheta=\vartheta_1\varphi^1+\vartheta_3\varphi^3+\bar\vartheta_1\bar\varphi^1+\vartheta_3\bar\varphi^3$, where $\vartheta_1\in\C$ and $\vartheta_3\in\R$.
By straightforward computations, which we performed with the aid of Sage \cite{sage}, we get that the arbitrary form $\omega$ in \eqref{eq:generic-metric} with conditions \eqref{eq:condition-generic-metric} is $d_\vartheta$-closed if and only if both $r^2=0$ and $s^2=0$.
\end{proof}

\subsection{Hermitian pluri-closed rank of \texorpdfstring{$6$}{6}-dimensional nilmanifolds}
Finally, we consider the pluri-closed rank $\mathrm{SKTr}(X)$ of $X$ as defined in \eqref{eq:SKT-rank} in Definition \ref{def:skt-rank}.

In case of solvmanifolds $X$ with associated Lie algebra $\mathfrak{g}$, a notion of {\em invariant pluri-closed rank} $\mathrm{SKTr}(\mathfrak{g})$ can be defined. Clearly, $\mathrm{SKTr}(\mathfrak{g})\leq \mathrm{SKTr}(X)$. Note that, in this case, the argument in the proof of Lemma \ref{lem:inv-ranks} does not apply. Indeed, there we make use of the map induced by the wedge product in the Morse-Novikov cohomology, $H^2_{d_\vartheta}(X)\times H^2_{d_\vartheta}(X) \to H^4_{d_{2\vartheta}}(X)$, which in turn is a consequence of the Leibniz rule for the twisted differential operator, namely $d_{k\vartheta}(\alpha\wedge\beta)=d_{h\vartheta}\alpha\wedge\beta+(-1)^{\mathrm{deg}\,\alpha}\,\alpha\wedge d_{(k-h)\vartheta}\beta$. But the $\partial\overline\partial$-operator, and the corresponding Aeppli cohomology, do not share these properties.

In the following Table \ref{table:ranks-nilmanifolds}, we show the invariant pluri-closed rank of $6$-dimensional nilmanifolds, summarizing also the ranks computed in the previous sections. The results follows by computing:

\begin{center}
  \begin{table}[ht]
  \centering
  \begin{tabular}{>{\bfseries\bgroup}l<{\bfseries\egroup} | >{$}c<{$} || >{$}c<{$} | >{$}c<{$} | >{$}c<{$} ||}
  \toprule
  \multicolumn{2}{c||}{\bfseries class} & \mathbf{\mathrm{Kr}(X)} & \mathbf{\mathrm{HlcKr}(X)} & \mathbf{\mathrm{SKTr}(\mathfrak{g})} \\
  \toprule
  \multirow{2}{*}{(P)} & \rho=0 & 3 & 3 & 3 \\
  & \rho=1 & 2 & 2 & 2 \\
  \midrule
  \multirow{2}{*}{(I)} & -\rho+D+\bar D-\lambda^2=0 & 2 & 2 & 3 \\
  & -\rho+D+\bar D-\lambda^2\neq0, \; (\rho,\lambda,D)\neq(0,0,-1) & 2 & 2 & 2 \\
  & (\rho,\lambda,D)=(0,0,-1) & 2 & 3 & 2 \\
  \midrule
  \multirow{2}{*}{(II)} & (\rho,B,c)\neq(1,1,0) & 1 & 2 & 2 \\
  & (\rho,B,c)=(1,1,0) & \geq1 & 2 & 2 \\
  \midrule
  (III) & & 1 & 1 & 1 \\
  \bottomrule
  \end{tabular}
  \caption{Special-Hermitian ranks for $6$-dimensional nilmanifolds endowed with invariant complex structures.}
  \label{table:ranks-nilmanifolds}
  \end{table}
  \end{center}

\begin{eqnarray*}
 \text{\bfseries(P):}&
     \quad\partial\overline\partial\omega \;=\;& -i t^2\rho\, \varphi^{12\bar1\bar2} \;; \\[5pt]
 \text{\bfseries(I):}&
     \quad\partial\overline\partial\omega \;=\;&
     \left(it^2(-\rho+D+\bar D-\lambda^2)\right) \,\varphi^{12\bar1\bar2} \;; \\[5pt]
 \text{\bfseries(II):}&
     \quad\partial\overline\partial\omega \;=\;&
     \left(-it^2(\rho+c^2+|B|^2)\right)\, \varphi^{12\bar1\bar2} \;; \\[5pt]
 \text{\bfseries(III):}&
     \quad\partial\overline\partial\omega \;=\;&
     (-2it^2)\, \varphi^{12\bar1\bar2}
     + (-2is^2)\, \varphi^{13\bar1\bar3} \;. \\[5pt]
\end{eqnarray*}

\begin{rmk}
From the results in Table \ref{table:ranks-nilmanifolds}, we note in particular the upper-semi-continuity of the Hermitian ranks.
We wonder whether this proerty holds in general.
\end{rmk}

\section{K\"ahler rank of a non-K\"ahler manifold constructed as suspension}
As another example, we consider here a non-K\"ahler manifold constructed as suspension over a torus.
We consider an explicit case of a more general construction which has been investigated by G.~\TH{}. Magnusson \cite{magnusson} to disprove the abundance and Iitaka conjectures for complex non-K\"ahler manifolds.
See also \cite[Example 3.1]{tosatti}, (and the references therein,) where V. Tosatti uses the same construction to get a complex non-K\"ahler manifold with vanishing first Bott-Chern class, whose canonical bundle is not holomorphically torsion.

\medskip

We first recall the construction by Yoshihara \cite[Example 4.1]{yoshihara} of a complex $2$-torus $X$ with an automorphism $f$ such that the induced automorphism on $H^0(X;K_X)\simeq\C$ has infinite order.

Consider the roots $\alpha\in\C$ and $\beta\in\C$ of the equation
$$ x^2-(1+\sqrt{-1})x+1 \;=\; 0 \;.$$
The minimal polynomial over $\Q$ of $\alpha$ and $\bar\beta$ is
$$ x^4-2x^3+4x^2-2x+1 \;=\; 0 \;.$$
In particular,
$$\left(\begin{array}{c}\alpha^4\\\bar\beta^4\end{array}\right)=2\left(\begin{array}{c}\alpha^3\\\bar\beta^3\end{array}\right)
-4\left(\begin{array}{c}\alpha^2\\\bar\beta^2\end{array}\right)+2\left(\begin{array}{c}\alpha\\\bar\beta\end{array}\right)-\left(\begin{array}{c}1\\1\end{array}\right)\;.$$

Consider the following lattice in $\C^2$:
$$ \Gamma \;:=\; \mathrm{span}_\Z \left\{
 \left(\begin{array}{c}1\\1\end{array}\right),\;
 \left(\begin{array}{c}\alpha\\\bar\beta\end{array}\right),\;
 \left(\begin{array}{c}\alpha^2\\\bar\beta^2\end{array}\right),\;
 \left(\begin{array}{c}\alpha^3\\\bar\beta^3\end{array}\right) \right\} \;.$$
Consider the torus
$$ X \;:=\; \left. \C^2 \middle\slash \Gamma \right. \;.$$

The automorphism
$$ f\colon \C^2\to \C^2 \;, \qquad f\left(\begin{array}{c}z_1\\z_2\end{array}\right) \;:=\; \left(\begin{array}{cc}\alpha&\\&\bar\beta\end{array}\right) \cdot \left(\begin{array}{c}z_1\\z_2\end{array}\right) $$
induces an automorphism of $X$.

Now, we recall the construction by G.~\TH{}. Magnusson \cite{magnusson} of the non-K\"ahler manifold $M$. 
Let
$$ C\;:=\;\left.\C \middle\slash (\Z\oplus\tau\Z)\right. $$
be an elliptic curve. Then $M$ is the total space of a holomorphic fibre bundle $M\to C$ with fibre $X$ as follows:
$$ M \;:=\; \left. X \times \C \middle \slash \Z^2\right. $$
where $\Z^2\circlearrowleft X\times\C$ acts as
$$ (\ell,m) \cdot (z,\,w) \;:=\; \left(f^m(z),\,w+\ell+m\,\tau\right)\;.$$
Note that $M$ is not K\"ahler, \cite[Proposition 1.2]{magnusson}, because of \cite[Corollary 4.10]{fujiki}.

\medskip

We claim that the K\"ahler rank of $M$ is equal to $1$.

In fact, note that the form
$$ d w \wedge d\bar w $$
on $\C$ yields a $d$-closed $(1,1)$-form of rank $1$ on $M$. Whence the K\"ahler rank of $M$ is greater than or equal to $1$.
On the other side, assume that there exists $\omega$ a $d$-closed $(1,1)$-form of rank at least $2$ on $M$.
It corresponds to a $d$-closed $\Z^2$-invariant $(1,1)$-form of rank at least $2$ on $X\times\C$. By the inclusion $\iota\colon X \ni x \mapsto (x,0) \in X\times \C$, it yields a $d$-closed $f$-invariant $(1,1)$-form of rank at least $1$ on $X$ --- say $\omega$ again.
Notice that $f$ sends invariant forms (with respect to the action of $\C^2$ on $X$) to invariant forms.
We have $\omega=\omega_{\text{inv}}+d\eta$ where $\omega_{\text{inv}}$ is invariant, and $\eta$ is a $1$-form. Then $f^*\omega=f^*\omega_{\text{inv}}+df^*\eta$, where $f^*\omega_{\text{inv}}$ is invariant. We get that $f^*\omega_{\text{inv}}=\omega_{\text{inv}}\mod d(\wedge^1\mathfrak{g}^\ast)$. Since the Lie algebra $\mathfrak{g}$ of $X$ is Abelian, we get $f^*\omega_{\text{inv}}=\omega_{\text{inv}}$.
We get that $\omega_{\text{inv}}$ is a $d$-closed invariant $f$-invariant $(1,1)$-form of rank at least $1$ on $X$. But this is not possible, since the only invariant $f$-invariant $(1,1)$-forms on $X$ are generated over $\C$ by $dz^1\wedge d\bar z^2$ and $dz^2\wedge d\bar z^1$.

\end{document}